\newtheorem{theorem}{Theorem}
\newtheorem{lemma}[theorem]{Lemma}
\newtheorem{corollary}[theorem]{Corollary}
\newenvironment{definition}[1][Definition]{\begin{trivlist}
\item[\hskip \labelsep {\bfseries #1}]}{\end{trivlist}}
\newenvironment{remark}[1][Remark]{\begin{trivlist}
\item[\hskip \labelsep {\bfseries #1}]}{\end{trivlist}}
\newcommand{\ds}{\displaystyle}
\newcommand{\mult}{\operatorname{mult}}
\numberwithin{subcase}{case}
\numberwithin{subsubcase}{subcase}
\numberwithin{claim}{theorem}
\title{\bf Rouch\'e's Theorem and the Geometry of Rational Functions}
\author{Trevor J. Richards\footnote{Email: trichards@monmouthcollege.edu}\vspace{6mm}\\{\em Department of Mathematics, Statistics, and Computer Science}\\{\em Monmouth College}\\{\em Monmouth, IL United States}}
\begin{document}
\maketitle

\begin{center}
\textbf{Keywords:} rational functions, zeros and poles, critical points, Rouch\'e's theorem
\end{center}
\begin{center}
\textbf{MSC2010:} 30C15
\end{center}

\begin{abstract}
In this note, we use Rouch\'e's theorem and the pleasant properties of the arithmetic of the logarithmic derivative to establish several new results regarding the geometry of the zeros, poles, and critical points of a rational function.  Included is an improvement on a result by Alexander and Walsh regarding the distance from a given zero or pole of a rational function to the nearest critical point.
\end{abstract}
\section{Introduction.}%

The logarithmic derivative of the product of two rational functions $g,h\in\mathbb{C}(z)$ may be broken, through the magic of the product rule, into the sum of the logarithmic derivatives: $$\dfrac{(gh)'}{gh}=\dfrac{g'}{g}+\dfrac{h'}{h}.$$  This supplies a ready-made setting for an application of Rouch\'e's theorem.  If we can show that $|g'/g|>|h'/h|$ on some simple closed path $\gamma$ (with bounded face $\Omega$), then Rouch\'e's theorem (along with the observation just made about the logarithmic derivative of a product) tells us that the difference between the number of zeros and the number of poles of $g'/g$ in $\Omega$ equals the difference between the number of zeros and the number of poles of $(gh)'/gh$ in $\Omega$  (see~\cite{C} for Rouch\'e's theorem in this form).  We arrived at this approach for analyzing the geometry of rational functions through working to develop an approximate version of the Gauss--Lucas theorem\footnote{This result is available on the online document posting site \textit{arXiv.org}\cite{R}, with an improvement in publication by Richards and Steinerberger\cite{RS}.  Rouch\'e's theorem also played a role in this latter work.}.  In this article, we will use this method to establish several new results regarding the geometry of rational functions.

\subsection{An exclusion region}

For our first result, we factor a single distinct zero or pole out of a rational function $f$, writing $f(z)=(z-z_0)^kh(z)$, where $k$ is a non-zero integer and $h(z_0)$ is finite and non-zero.  We call the largest punctured ball centered at $z_0$ which contains no zeros, poles, or critical points of $f$ the exclusion region for $f$ around $z_0$.  Applying our logarithmic derivative/Rouch\'e's theorem approach to $f$ thus factored, we obtain a lower bound for the radius of the exclusion region, in terms of the multiplicity of $z_0$ as a zero or pole of $f$, and the distances from $z_0$ to the other zeros and poles of $f$.  In order to discuss these distances efficiently in the appropriate way, we begin with several definitions.

\begin{definition}
For a rational function $f$, we make the following definitions.

\begin{itemize}
\item
Let $Z_f$ and $P_f$ denote the collections of the distinct finite zeros and poles of $f$ respectively.

\item
For $z\in Z_f\cup P_f$, let $\mult_f(z)$ denote the multiplicity of $z$ as a zero or pole of $f$ (positive if a zero, and negative if a pole).
\item
For $z\in\mathbb{C}$, define $d_f(z)$ to be the minimum distance from $z$ to any element of $Z_f\cup P_f\setminus\{z\}$ (that is, the smallest non-zero distance from $z$ to a zero or pole of $f$).
\item
For any point $z\in\mathbb{C}$, define $\rho_f(z)=\ds\sum_{w\in Z_f\cup P_f,w\neq z}\left|\dfrac{\mult_f(w)}{z-w}\right|$.
\end{itemize}

\end{definition}

In 1915, Alexander\cite{A} showed in the case that $f$ is a polynomial, that the exclusion region for $f$ around $z_0$ has radius at least $d_f(z_0)\cdot\mult_f(z_0)/\deg(f)$.  In 1918, Walsh\cite{W} generalized this result to rational functions with a proof based in the context of binary forms, and more recently Alexander's result was re-proven by Khavinson et. al.\cite{K} using matrix theory.  We will prove the following improvement on the Alexander-Walsh result.

\begin{theorem}~\label{thm: Distance to nearest critical point.}
Let $f$ be a rational function with a zero or pole at $z_0\in\mathbb{C}$.  Then $f$ has no critical points in the punctured ball centered at $z_0$ with radius $$\dfrac{d_f(z_0)\cdot|\mult_f(z_0)|}{d_f(z_0)\cdot\rho_f(z_0)+|\mult_f(z_0)|}.$$
\end{theorem}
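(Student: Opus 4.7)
The plan is to apply the logarithmic-derivative/Rouch\'e scheme outlined in the introduction to the factorization $f(z)=(z-z_0)^k h(z)$, where $k=\mult_f(z_0)$ and $h(z_0)$ is finite and nonzero. With $g(z)=(z-z_0)^k$ one has $f'/f=g'/g+h'/h$, where $g'(z)/g(z)=k/(z-z_0)$ and
$$\frac{h'(z)}{h(z)}=\sum_{w\in(Z_f\cup P_f)\setminus\{z_0\}}\frac{\mult_f(w)}{z-w},$$
which is holomorphic on the open ball of radius $d_f(z_0)$ about $z_0$.

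Fix $s\in(0,r)$, where $r$ is the radius in the statement, and compare $|g'/g|$ with $|h'/h|$ on the circle $|z-z_0|=s$. For each $w\in Z_f\cup P_f$ with $w\neq z_0$, the triangle inequality yields $|z-w|\geq|z_0-w|-s$; combining this with the universal lower bound $|z_0-w|\geq d_f(z_0)$ and the fact that $x\mapsto x/(x-s)$ is decreasing on $(s,\infty)$ gives
$$\left|\frac{h'(z)}{h(z)}\right|\leq\sum_{w\neq z_0}\frac{|\mult_f(w)|}{|z_0-w|-s}\leq\frac{d_f(z_0)\,\rho_f(z_0)}{d_f(z_0)-s}.$$
A short algebraic manipulation then shows that $|g'(z)/g(z)|=|k|/s$ strictly exceeds this upper bound precisely when $s<r$; this is the step where the denominator $d_f(z_0)\rho_f(z_0)+|\mult_f(z_0)|$ arises organically.

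With the strict inequality on the circle in hand, Rouch\'e's theorem (in the meromorphic form recalled in the introduction) applied to $g'/g$ and $f'/f=g'/g+h'/h$ gives
$$N(f'/f)-P(f'/f)=N(g'/g)-P(g'/g)=0-1=-1$$
on the open disk $|z-z_0|<s$, where $N$ and $P$ count zeros and poles with multiplicity. Since $r\leq d_f(z_0)$, the disk meets $Z_f\cup P_f$ only at $z_0$, which contributes a single simple pole to $f'/f$. Hence $P(f'/f)=1$ and $N(f'/f)=0$, so $f'/f$ has no zeros in the disk. As the zeros of $f'/f$ in the punctured disk are precisely the critical points of $f$ distinct from $z_0$, this yields the conclusion for every $s<r$, and therefore for the full punctured ball of radius $r$.

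The main obstacle I foresee is the bound on $|h'/h|$: the radius in the statement is expressed through $\rho_f(z_0)$, which is anchored at $z_0$, whereas the unadorned Rouch\'e computation naturally produces $|z-w|$ in the denominators. The two-step passage from $|z-w|$ to $|z_0-w|$ (via the triangle inequality and then via the monotonicity of $x/(x-s)$ with $x\geq d_f(z_0)$) is the one nontrivial observation; once it is in place, the rest reduces to bookkeeping inside Rouch\'e's theorem.
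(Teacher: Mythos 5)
Your proposal is correct and follows essentially the same route as the paper: factor $f(z)=(z-z_0)^k h(z)$, bound $|h'/h|$ on the circle $|z-z_0|=s$ by $d_f(z_0)\rho_f(z_0)/(d_f(z_0)-s)$, and compare with $|k|/s$ via Rouch\'e's theorem applied to the logarithmic derivatives. The only cosmetic difference is that you derive the bound on $|h'/h|$ inline (triangle inequality plus monotonicity of $x/(x-s)$) instead of invoking the paper's continuity lemma for $\rho_f$, whose proof is the same computation.
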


\begin{remark}
To see that the conclusion of Theorem~\ref{thm: Distance to nearest critical point.} improves on the Alexander-Walsh result, observe that $$d_f(z_0)\cdot\rho_f(z_0)=\ds\sum_{w\in Z_f\cup P_f,w\neq z_0}d_f(z_0)\left|\dfrac{\mult_f(w)}{z_0-w}\right|\leq\sum_{w\in Z_f\cup P_f,w\neq z_0}|\mult_f(w)|,$$  so that we may bound the denominator of our exclusion radius from Theorem~\ref{thm: Distance to nearest critical point.} by $$d_f(z_0)\cdot\rho_f(z_0)+|\mult_f(z_0)|\leq \deg(f),$$ with equality holding only in the case that all of the finite zeros and poles of $f$ (other than $z_0$) lie on the same circle centered at $z_0$.  (Note that in the preceding sentence, $\deg(f)$ denotes the number of finite zeros and poles of $f$, counted with multiplicity: $\deg(f)=\ds\sum_{w\in Z_f\cup P_f}|\mult_f(w)|$.)
\end{remark}

\subsection{A generalization of the exclusion region, with critical point approximation}

Theorem~\ref{thm: Distance to nearest critical point.} states that for $z_0$ a zero or pole of a rational function $f$, a sufficient condition to ensure that $f$ has no critical points in the punctured ball $B^o(z_0;R)$ is the inequality $$R\leq\dfrac{d_f(z_0)\cdot|\mult_f(z_0)|}{d_f(z_0)\cdot\rho_f(z_0)+|\mult_f(z_0)|}.$$

Solving this inequality for $\rho_f$, we obtain
\begin{equation}
\label{eqn: First for rho.}
\rho_f(z_0)\leq|\mult_f(z_0)|\left(\dfrac{1}{R}-\dfrac{1}{d_f(z_0)}\right).
\end{equation}
It is straightforward from the definition of $\rho_f$ and $d_f$ that $1/d_f(z_0)\leq\rho_f(z_0)$.  Using this bound in conjunction with Inequality~\ref{eqn: First for rho.}, after some arithmetic we obtain as a sufficient condition for the function $f(z)=(z-z_0)^kh(z)$ to have no critical points in the punctured ball $B^o(z_0;R)$ the inequality $$\rho_h(z_0)\leq \dfrac{|k|}{(|k|+1)R}.$$  In order to motivate our next theorem, we write this up as a corollary as follows.

\begin{corollary}\label{cor: Motivating corollary.}
For any point $z_0\in\mathbb{C}$, any non-zero integer $k$, and any positive constant $R>0$, if $h$ is a rational function with $$\rho_h(z_0)\leq\dfrac{|k|}{(|k|+1)R},$$ then the function $(z-z_0)^kh(z)$ has no critical points in the punctured ball $B^o(z_0;R)$.
\end{corollary}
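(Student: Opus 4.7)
The plan is to derive the corollary directly from Theorem~\ref{thm: Distance to nearest critical point.} via the algebraic manipulation previewed in the text preceding the statement. First I would apply Theorem~\ref{thm: Distance to nearest critical point.} to the rational function $f(z)=(z-z_0)^k h(z)$, in the typical case that $h(z_0)$ is finite and non-zero. Then $z_0$ is a zero or pole of $f$ of multiplicity exactly $k$, and $Z_f\cup P_f\setminus\{z_0\}=Z_h\cup P_h$ with matching multiplicities, so $d_f(z_0)=d_h(z_0)$ and $\rho_f(z_0)=\rho_h(z_0)$. Theorem~\ref{thm: Distance to nearest critical point.} therefore guarantees that $f$ has no critical points in $B^o(z_0;R)$ whenever
$$R\leq\dfrac{d_h(z_0)\cdot|k|}{d_h(z_0)\cdot\rho_h(z_0)+|k|}.$$

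Second, I would clear denominators and rewrite this as the equivalent condition $\rho_h(z_0)+|k|/d_h(z_0)\leq|k|/R$. Third, I would invoke the elementary bound $1/d_h(z_0)\leq\rho_h(z_0)$, which holds because $\rho_h(z_0)$ is a sum of terms $|\mult_h(w)/(z_0-w)|$ at least one of which has $|z_0-w|=d_h(z_0)$ and $|\mult_h(w)|\geq 1$. Using this bound to replace $|k|/d_h(z_0)$ by the larger quantity $|k|\rho_h(z_0)$ yields the \emph{stronger} sufficient condition $(|k|+1)\rho_h(z_0)\leq|k|/R$, which is exactly the hypothesis of the corollary.

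There is essentially no obstacle here; the whole argument is a few lines of algebra piggybacking on Theorem~\ref{thm: Distance to nearest critical point.}. The only minor wrinkle is the degenerate case where $h$ has no finite zeros or poles (so $\rho_h(z_0)=0$ and $d_h(z_0)$ is undefined), in which $f(z)=c(z-z_0)^k$ clearly has no critical points away from $z_0$ and the conclusion is immediate; if one wants to allow $z_0$ itself to be a zero or pole of $h$, one simply absorbs that local factor into $(z-z_0)^k$ and reruns the same argument with the adjusted multiplicity.
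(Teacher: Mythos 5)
Your proposal is correct and follows essentially the same route as the paper, which derives the corollary in the text immediately preceding it: apply Theorem~\ref{thm: Distance to nearest critical point.} to $f(z)=(z-z_0)^kh(z)$, rearrange the sufficient condition on $R$ into a bound on $\rho$, and use $1/d_h(z_0)\leq\rho_h(z_0)$ to reach $\rho_h(z_0)\leq |k|/((|k|+1)R)$. Your brief handling of the degenerate cases (no finite zeros or poles of $h$, or $z_0$ itself a zero or pole of $h$) goes slightly beyond what the paper spells out, which implicitly assumes $h(z_0)$ finite and non-zero, but this does not change the substance of the argument.
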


If we replace the factor $(z-z_0)^k$ in Corollary~\ref{cor: Motivating corollary.} with a rational function $g(z)$, we see in the following result that by forcing $\rho_h$ to be sufficiently small, we may force some of the critical points of $g(z)h(z)$ to approximate the critical points of $g(z)$, with respect to multiplicity, while the other critical points of $g(z)h(z)$ are excluded from an arbitrarily large disk centered at the origin.

\begin{theorem}\label{thm: Adding in distant zeros and poles.}
For any rational function $g$, any sufficiently large $R>0$, and any sufficiently small $\epsilon>0$, there is a constant $K>0$ such that for any rational function $h$, if $\rho_h(0)<K$ and $h(0)$ is finite and non-zero, then the following holds:
\begin{enumerate}
\item
If $z$ is a critical point of $g$ with multiplicity $m$, then there are exactly $m$ critical points of $g\cdot h$ lying within $\epsilon$ of $z$.

\item
Other than those described in the previous item, there are no critical points of $g\cdot h$ which lie in the disk centered at the origin with radius $R$.
\end{enumerate}
\end{theorem}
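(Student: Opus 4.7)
The plan is to apply Rouch\'e's theorem to the pair $G(z) = g'(z)/g(z)$ and $F(z) = (gh)'(z)/(gh)(z) = G(z) + h'(z)/h(z)$ on small circles around each zero, pole, and critical point of $g$, and on $\partial B(0;R)$.  The hypothesis $\rho_h(0)<K$ forces each zero or pole $w$ of $h$ to satisfy $|w| > |\mult_h(w)|/K \geq 1/K$, so choosing $K<1/(2R)$ drives every such $w$ outside $B(0;2R)$; a triangle-inequality estimate then yields
$$\left|\frac{h'(z)}{h(z)}\right| \;\leq\; \sum_{w} \frac{|\mult_h(w)|}{|w|-R} \;\leq\; 2\rho_h(0) \;<\; 2K \qquad \text{for } |z|\leq R.$$

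Next, I would take $R$ large enough that all finite zeros, poles, and critical points of $g$ lie in the interior of $B(0;R)$, and $\epsilon$ small enough that the closed balls about these special points are pairwise disjoint subsets of $B(0;R)$.  On the compact union $\Gamma$ of these boundary circles together with $\partial B(0;R)$, $G$ is holomorphic and nonzero, so $\delta := \min_{\Gamma}|G|$ is positive; further requiring $K<\delta/2$ gives $|F-G|<\delta\leq|G|$ on $\Gamma$, so Rouch\'e applies on each component.  Because $h$ is zero- and pole-free on $B(0;R)$, the poles of $F$ and $G$ coincide inside each Rouch\'e region, so the generalized conclusion $Z_F-P_F=Z_G-P_G$ collapses to $Z_F=Z_G$.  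A case analysis on each special point $c$ of $g$ then establishes (1): if $c$ is a critical point of $g$ of multiplicity $m$ disjoint from $Z_g\cup P_g$, then $G$ has a zero of order $m$ at $c$ and thus $Z_F=Z_G=m$ in $B(c;\epsilon)$, producing $m$ critical points of $gh$ near $c$; if $c$ is a zero of $g$ of multiplicity $k \geq 2$, then $Z_G=0$ in $B(c;\epsilon)$ while $c$ itself is a critical point of $gh$ of multiplicity $k-1$ (since $h(c)\neq 0$), matching the critical multiplicity of $g$ at $c$; and if $c$ is a simple zero or a pole of $g$, then $Z_G=0$ and $c$ is not a critical point of $gh$.

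Conclusion (2) follows from the same Rouch\'e comparison carried out on $\partial B(0;R)$: one obtains $Z_F=Z_G$ in $B(0;R)$, so the total multiplicity of those critical points of $gh$ in $B(0;R)$ which are not zeros of $gh$ equals the analogous quantity for $g$, all of which has already been localized inside the $\epsilon$-balls of (1); the remaining critical points of $gh$ in $B(0;R)$ occur only at zeros of $gh$ of multiplicity $\geq 2$, which coincide with zeros of $g$ of multiplicity $\geq 2$ and are therefore already centers of the $\epsilon$-balls in (1).  The principal obstacle is coordinating the three parameters $R$, $\epsilon$, $K$ with the geometry of $g$ and managing the case analysis so that the pole structures of $F$ and $G$ agree inside every Rouch\'e region; the smallness of $\rho_h(0)$ is precisely the lever that delivers both the pole match (by banishing zeros and poles of $h$ from $B(0;R)$) and the dominance estimate $|h'/h|<|g'/g|$.
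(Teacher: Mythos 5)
Your proposal is correct and follows essentially the same route as the paper: compare $g'/g$ with $(gh)'/(gh)$ via Rouch\'e on $\epsilon$-circles about the zeros, poles, and critical points of $g$ and on $C(0;R)$, with $K$ taken below both $1/(2R)$ and half the minimum of $|g'/g|$ on those contours, followed by the same case analysis. The only (harmless) deviation is that you obtain the bound $|h'/h|\leq 2\rho_h(0)$ on $\overline{B(0;R)}$ by a direct triangle-inequality estimate rather than by invoking the paper's Lemma~\ref{lem: Continuity of rho.}.
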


Observe that the constant $K$ chosen in Theorem~\ref{thm: Adding in distant zeros and poles.} depends on the choice of the rational function $g(z)$.  In the special case where $g$ is a monic polynomial $p(z)$ with all of its zeros lying in the unit disk, we may find an upper bound for $|p|$ on a disk centered at the origin of radius $R$, namely $(1+R)^{\deg(p)}$.  This allows us to choose the appropriate upper bound for $\rho_h(0)$ depending only on the degree of $p$, but not on $p$ itself.

\begin{theorem}\label{thm: Adding in distant zeros and poles, polynomial case.}
For any positive integer $n$, and any positive constants $R>1$ and $\epsilon>0$, there is a constant $L>0$ such that the following holds.  For any degree $n$ polynomial $p$ having all of its zeros in the unit disk, and any rational function $h$, if $\rho_h(0)<L$ and $h(0)$ is finite and non-zero, then the following holds.
\begin{enumerate}
\item
If $z$ is a critical point of $p$ with multiplicity $m$, then there are at least $m$ critical points of $p\cdot h$ lying within $\epsilon$ of $z$.
\item
There are exactly $n-1$ critical points of $p\cdot h$ lying in the disk centered at the origin with radius $R$.
\end{enumerate}
\end{theorem}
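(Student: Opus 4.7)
The plan is to compare, via Rouch\'e's theorem on the disk $\{|z|\le R\}$, the holomorphic function $\Phi(z):=p'(z)+p(z)\bigl(h'(z)/h(z)\bigr)$ with the polynomial $p'(z)$. The identity $(p\cdot h)'=h\cdot\Phi$, together with the fact that $h$ will be non-vanishing on this disk, ensures that zeros of $\Phi$ in $\{|z|<R\}$ correspond exactly to critical points of $p\cdot h$ there. Rescaling $p$ to be monic (which preserves all critical points involved), we will rely on two clean estimates available uniformly in $p$: the crude bound $|p(z)|\le(1+R)^n$ on $\{|z|\le R\}$, and the Gauss--Lucas lower bound $|p'(z)|\ge n(R-1)^{n-1}$ on $|z|=R$, coming from the factorization $p'(z)=n\prod_{i=1}^{n-1}(z-c_i)$ with every $|c_i|\le 1$.

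First I would require $L\le 1/(2R)$, which forces every zero or pole $w$ of $h$ to satisfy $|w|>2R$ (otherwise the single term $|\mult_h(w)/w|\ge 1/(2R)\ge L$ would already exceed $\rho_h(0)$). A routine estimate then gives $|z-w|\ge|w|/2$ for $|z|\le R$, and summing over $w$ yields $|h'(z)/h(z)|\le 2\rho_h(0)<2L$ on the entire disk; hence $|\Phi(z)-p'(z)|=|p(z)||h'(z)/h(z)|\le 2L(1+R)^n$ there. For part (2) I would additionally demand $L<\frac{n(R-1)^{n-1}}{2(1+R)^n}$, which makes $|\Phi-p'|<|p'|$ on $|z|=R$; Rouch\'e then produces the same number of zeros of $\Phi$ and of $p'$ in $\{|z|<R\}$, namely $n-1$. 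Since $h$ is zero- and pole-free on the disk, these are precisely the critical points of $p\cdot h$ in $\{|z|<R\}$.

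For part (1) I would set $\epsilon':=\min\!\bigl(\epsilon/(2n),(R-1)/2\bigr)$ and impose in addition $L<\frac{n(\epsilon')^{n-1}}{2(1+R)^n}$. Let $U:=\bigcup_{c:\,p'(c)=0}B(c,\epsilon')$ and let $S$ be any connected component of $U$; on $\partial S$ every point lies at distance at least $\epsilon'$ from every critical point of $p$, so $|p'(z)|\ge n(\epsilon')^{n-1}>|\Phi(z)-p'(z)|$. By Rouch\'e, $\Phi$ and $p'$ have the same number of zeros in $S$, equal to the total multiplicity of the critical points of $p$ contained in $S$. The key geometric observation is that $S$, being a connected union of at most $n-1$ open balls of radius $\epsilon'$, has diameter at most $2(n-1)\epsilon'<\epsilon$; hence for any critical point $z$ of $p$ of multiplicity $m$, the component $S$ containing $z$ lies inside $B(z,\epsilon)$ and supplies at least $m$ zeros of $\Phi$ there. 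The main obstacle is precisely that the minimum separation of the critical points of $p$ admits no positive lower bound uniform in $p$, which rules out the stronger ``exactly $m$'' statement per individual $\epsilon$-ball; the trick of shrinking $\epsilon$ to $\epsilon/(2n)$ before forming the clusters is what rescues the ``at least $m$'' conclusion with $L$ depending only on $n$, $R$, and $\epsilon$.
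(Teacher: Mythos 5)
Your proof is correct, and it runs on the same engine as the paper's: a Rouch\'e comparison in which the perturbation coming from $h$ is controlled by bounds uniform over all degree-$n$ polynomials with zeros in the unit disk, together with the cluster-and-diameter trick that converts ``same count in each component'' into the ``at least $m$ within $\epsilon$'' conclusion, and your closing remark about why only ``at least $m$'' is available matches the paper's own remark. The implementation, however, differs in several genuine ways. You compare the holomorphic functions $\Phi=p'+p\,(h'/h)$ and $p'$ directly, using $(p h)'=h\Phi$ with $h$ zero- and pole-free on the disk, whereas the paper compares the logarithmic derivatives $(ph)'/(ph)$ and $p'/p$; your version counts all critical points at once, avoids the zeros-minus-poles bookkeeping of Lemma~\ref{lem: Lemma.}, and lets you cluster only around $Z_{p'}$ rather than around $Z_p\cup Z_{p'}$ as the paper does. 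Your observation that $\rho_h(0)<L\le 1/(2R)$ forces every zero and pole of $h$ outside $\{|z|\le 2R\}$, whence $|h'/h|\le 2\rho_h(0)$ on all of $\{|z|\le R\}$, is a clean direct substitute for the paper's appeal to Lemma~\ref{lem: Continuity of rho.}, with the same factor of $2$. On $\{|z|=R\}$ the paper uses the real-part estimate $\Re(p'/p)>n(R-1)/((R+1)^2+1)$, while you use the Gauss--Lucas factorization to get $|p'|\ge n(R-1)^{n-1}$ against $|p|\le(1+R)^n$; both yield an admissible $L$ depending only on $n$, $R$, $\epsilon$. The only point to flag, at the same level of informality as the paper itself, is the application of Rouch\'e's theorem over a component $S$ of a union of balls, whose boundary need not be a single smooth Jordan curve; the standard boundary form of Rouch\'e (equivalently, the argument principle for the bounded domain $S$) covers this, and the paper's own proof relies on the same convention.
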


\begin{remark}
Since the polynomial $p$ is chosen after the constant $L$ in the statement of Theorem~\ref{thm: Adding in distant zeros and poles, polynomial case.}, we cannot guarantee that, if $z$ is a critical point of $f$ with multiplicity $m$, there will be exactly $m$ critical points of $p\cdot h$ within $\epsilon$ of $z$, as there may be other critical points of $p$ lying within $\epsilon$ of $z$.  Excluding this possibility is the reason for the ``sufficiently small'' condition on the $\epsilon$ in the statement of Theorem~\ref{thm: Adding in distant zeros and poles.}.
\end{remark}

\subsection{The Critical Points of $g\cdot h^n$}

Theorems and Corollary~\ref{thm: Distance to nearest critical point.}-\ref{thm: Adding in distant zeros and poles, polynomial case.} have each made use of the way in which the logarithmic derivative converts multiplication into addition: $$\dfrac{(gh)'}{gh}=\dfrac{g'}{g}+\dfrac{h'}{h}.$$  The next result uses also the fact that the logarithmic derivative converts exponentiation into scalar multiplication: $$\dfrac{(h^n)'}{h^n}=n\dfrac{h'}{h}.$$  This property is used in the proof of Theorem~\ref{thm: Critical points of gh^n.} below to show that, for any rational functions $g$ and $h$, the non-trivial critical points (that is, those which are not also zeros or poles of the function in question) of the sequence of rational functions $\{g\cdot h^n\}$ approach i) the non-trivial critical point of $h$, with respect to multiplicity, and ii) the zeros and poles of $g$, without respect to multiplicity.

\begin{theorem}\label{thm: Critical points of gh^n.}
For any rational function $g$ and any non-constant rational function $h$, and any sufficiently small $\epsilon>0$, if $n$ is sufficiently large, then the following holds.
\begin{enumerate}
\item
If $z$ is a non-trivial critical point of $h$ with multiplicity $m$, then there are exactly $m$ non-trivial critical points of $g\cdot h^n$ lying within $\epsilon$ of $z$, counted with multiplicity.

\item
If $z$ is a zero or pole of $g$, then there is exactly $1$ non-trivial critical point of $g\cdot h^n$ lying within $\epsilon$ of $z$, counted with multiplicity.
\end{enumerate}

\end{theorem}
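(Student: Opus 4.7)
The plan centers on the identity
$$\dfrac{(gh^n)'}{gh^n}=\dfrac{g'}{g}+n\dfrac{h'}{h},$$
whose rescaled form $\phi_n:=\tfrac{1}{n}\cdot\tfrac{g'}{g}+\tfrac{h'}{h}$ has the convenient property that its zeros (counted with multiplicity) are exactly the non-trivial critical points of $gh^n$, while its poles are exactly the zeros and poles of $gh^n$. Since $\phi_n\to h'/h$ locally uniformly on $\mathbb{C}\setminus(Z_g\cup P_g)$ as $n\to\infty$, a Rouch\'e comparison of $\phi_n$ with $h'/h$ on suitable small disks should pin down all the zeros of $\phi_n$ of interest.

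Concretely, I would choose $\epsilon>0$ small enough that the closed $\epsilon$-disks centered at the non-trivial critical points of $h$ and at the elements of $Z_g\cup P_g$ are pairwise disjoint, and so that each such disk $D$ avoids every singularity other than its own center (i.e., avoids all other zeros and poles of $g$ and of $h$, and all other zeros of $h'/h$). For clarity I treat the generic case in which no element of $Z_g\cup P_g$ coincides with a zero, pole, or critical point of $h$. On $\partial D$, $g'/g$ is analytic hence bounded by some $M$, while $h'/h$ is analytic and nonvanishing, hence bounded below by some $\mu>0$. For $n>M/\mu$ we have $|\tfrac{1}{n}g'/g|<|h'/h|$ on $\partial D$, so the meromorphic form of Rouch\'e yields
$$(\text{zeros}-\text{poles})(\phi_n;D)=(\text{zeros}-\text{poles})(h'/h;D).$$

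In Case 1 ($D$ centered at a non-trivial critical point $z$ of $h$ of multiplicity $m$), the right-hand side equals $m-0=m$, and $\phi_n$ is analytic on $D$ by the choice of $\epsilon$, so $\phi_n$ has exactly $m$ zeros in $D$; since $gh^n$ has no zeros or poles in $D$, these are all non-trivial critical points of $gh^n$. In Case 2 ($D$ centered at a zero or pole $z_0$ of $g$ of multiplicity $k$), the right-hand side equals $0-0=0$, while $\phi_n$ has exactly one pole in $D$---a simple pole at $z_0$ of residue $k/n$, inherited from $\tfrac{1}{n}g'/g$---so $\phi_n$ has exactly one zero in $D$; this zero is distinct from $z_0$ (since $\phi_n$ has a pole there) and is therefore a non-trivial critical point of $gh^n$.

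The main obstacle is establishing the Rouch\'e inequality uniformly on $\partial D$, which reduces to securing both $\mu>0$ and $M<\infty$; this is possible precisely because $\partial D$ avoids the non-trivial critical points of $h$ and the elements of $Z_g\cup P_g$, and the inequality is then automatic once $n$ is large. The one genuinely delicate scenario is the coincidence case, in which an element of $Z_g\cup P_g$ is simultaneously a zero, pole, or critical point of $h$: there $\phi_n$ inherits both the simple pole of $\tfrac{1}{n}g'/g$ and the vanishing or singular behavior of $h'/h$ at the common center, and the cleanest treatment is a local rescaling $w=n(z-z_0)$ followed by Hurwitz's theorem applied to the resulting limit function in $w$. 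In the generic setting, however, the direct Rouch\'e comparison above is immediate.
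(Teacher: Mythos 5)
Your proposal is correct and follows essentially the same route as the paper's proof: a Rouch\'e comparison on small circles around the non-trivial critical points of $h$ and the zeros and poles of $g$, where $n\,h'/h$ dominates $g'/g$ for large $n$ (your $\phi_n=\tfrac1n g'/g+h'/h$ is just the paper's comparison rescaled by $1/n$), followed by the same two-case count of zeros minus poles of the logarithmic derivative. You are in fact more candid than the paper about the coincidence case where a zero or pole of $g$ meets a zero, pole, or critical point of $h$, which the paper's proof (and, arguably, the theorem statement) silently assumes away.
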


\begin{remark}
A simple counting argument shows that if $f(z)$ is a rational function with $k$ distinct zeros and poles, then $f$ has exactly $k-1$ non-trivial critical points, counting multiplicity.  This then implies that the non-trivial critical points of $g\cdot h^n$ described in Items~1 and~2 of Theorem~\ref{thm: Critical points of gh^n.} are in fact all of the non-trivial critical points of $g\cdot h^n$.
\end{remark}

Before proceeding to the proofs, we begin with two helpful lemmas, regarding the function $\rho_f$ and the zeros and poles of the logarithmic derivative of a rational function.

\section{Lemmas.}\label{sect: Lemma.}%

The following lemma gives a quantification of the continuity of $\rho_f$ away from the zeros and poles of $f$.

\begin{lemma}\label{lem: Continuity of rho.}
Let $f$ be a rational function, and let $z_1\in\mathbb{C}$ be neither a zero nor a pole of $f$.  Then for any $\epsilon\in(0,d_f(z_1))$, if $z_2\in\mathbb{C}$ with $|z_2-z_1|<\epsilon$, then $$\rho_f(z_2)<\rho_f(z_1)\dfrac{d_f(z_1)}{d_f(z_1)-|z_2-z_1|}.$$
\end{lemma}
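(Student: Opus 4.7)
The plan is to obtain the bound by estimating $\rho_f(z_2)$ term by term against $\rho_f(z_1)$, using only the reverse triangle inequality and the defining property of $d_f(z_1)$. I first note that the hypothesis $|z_2 - z_1| < \epsilon < d_f(z_1)$ forces $z_2 \notin Z_f \cup P_f$, since for every $w \in Z_f \cup P_f$ we have $|z_2 - w| \geq |z_1 - w| - |z_1 - z_2| > d_f(z_1) - \epsilon > 0$. Consequently the sums defining $\rho_f(z_1)$ and $\rho_f(z_2)$ both run over the same index set $Z_f \cup P_f$, which makes a term-by-term comparison legitimate.

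For each $w \in Z_f \cup P_f$, the reverse triangle inequality already used above gives $|z_2 - w| \geq |z_1 - w| - |z_1 - z_2|$, with both sides strictly positive. The core step of the proof is then the pointwise estimate
$$\frac{1}{|z_1 - w| - |z_1 - z_2|} \;\leq\; \frac{1}{|z_1 - w|} \cdot \frac{d_f(z_1)}{d_f(z_1) - |z_1 - z_2|},$$
which, after clearing denominators, reduces to $d_f(z_1) \leq |z_1 - w|$, true by definition of $d_f(z_1)$. Multiplying this by $|\mult_f(w)|$ and summing over $w \in Z_f \cup P_f$ yields $\rho_f(z_2) \leq \rho_f(z_1) \cdot d_f(z_1)/(d_f(z_1) - |z_1 - z_2|)$, which is the desired bound in its non-strict form.

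The calculations are elementary, so the only delicate point, and what I expect to be the main (minor) obstacle, is upgrading to strict inequality. Strictness is automatic as soon as some $w \in Z_f \cup P_f$ satisfies $|z_1 - w| > d_f(z_1)$, since for that $w$ the displayed comparison is strict. In the remaining degenerate case, where every zero and pole of $f$ lies exactly on the circle $\{|z - z_1| = d_f(z_1)\}$, the reverse triangle inequality $|z_2 - w| \geq |z_1 - w| - |z_1 - z_2|$ is itself strict unless $z_1$ lies on the segment from $z_2$ to $w$; since this collinearity can hold for at most one direction from $z_2$, a short geometric argument rules out simultaneous equality across all $w$ provided $z_2 \neq z_1$, giving the strict bound as stated.
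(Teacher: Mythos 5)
Your main estimate is essentially the paper's own argument: a term-by-term comparison based on the (reverse) triangle inequality together with $|z_1-w|\ge d_f(z_1)$. The paper routes the computation through $|\rho_f(z_2)-\rho_f(z_1)|$ and then adds $\rho_f(z_1)$, whereas you bound $\rho_f(z_2)$ directly; this is only a cosmetic difference, and your derivation of the non-strict bound $\rho_f(z_2)\le\rho_f(z_1)\,d_f(z_1)/\bigl(d_f(z_1)-|z_2-z_1|\bigr)$ is correct.

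The flaw is in your final paragraph, where you try to upgrade to the strict inequality of the statement. First, the equality case of $|z_2-w|\ge|z_1-w|-|z_1-z_2|$ is that $z_2$ lies on the segment from $z_1$ to $w$, not that $z_1$ lies on the segment from $z_2$ to $w$. More importantly, the claim that simultaneous equality over all $w$ can always be ruled out is false when $Z_f\cup P_f$ is a single point: take $f(z)=z-1$, $z_1=0$, and $z_2=t\in(0,\epsilon)$; then $\rho_f(z_2)=1/(1-t)$, which equals $\rho_f(z_1)\,d_f(z_1)/\bigl(d_f(z_1)-|z_2-z_1|\bigr)$ exactly, so the strict inequality asserted in the lemma genuinely fails there (as it also does for $z_2=z_1$). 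Hence this is not a gap you could have closed by a cleverer geometric argument; the strict form of the statement itself is too strong in these degenerate cases, and the correct conclusion of your computation is the non-strict one (strictness does hold as soon as $z_2\ne z_1$ and either some zero or pole lies off the circle $|w-z_1|=d_f(z_1)$ or $f$ has at least two distinct zeros and poles). For what it is worth, the paper's own proof shares this blemish: its strict step in the per-term bound requires $|z_1-w|>d_f(z_1)$, which fails for the nearest zero or pole. None of this matters downstream, since every later application of the lemma only needs the non-strict bound, with additional strict slack coming from the choices of the constants $K$ and $L$.
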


\begin{proof}
Let $\epsilon\in(0,d_f(z_1))$ be chosen.  Let $z_2\in\mathbb{C}$ be chosen such that $|z_2-z_1|<\epsilon$, and let $w$ be a zero or pole of $f$.  Then several applications of the triangle inequality give us
\begin{multline}\label{eqn: Single term difference bound.}
\left|\dfrac{1}{|z_2-w|}-\dfrac{1}{|z_1-w|}\right|\leq\dfrac{|z_1-z_2|}{|z_2-w|\cdot|z_1-w|}\leq\dfrac{1}{|z_1-w|}\dfrac{|z_2-z_1|}{|z_1-w|-|z_2-z_1|}\\<\dfrac{1}{|z_1-w|}\dfrac{|z_2-z_1|}{d_f(z_1)-|z_2-z_1|}.
\end{multline}

Thus applying the triangle inequality and Inequality~\ref{eqn: Single term difference bound.} to $\left|\rho_f(z_2)-\rho_f(z_1)\right|$, we obtain
\begin{multline*}\left|\rho_f(z_2)-\rho_f(z_1)\right|=\left|\ds\sum_{w\in Z_f\cup P_f}\dfrac{1}{|z_2-w|}-\dfrac{1}{|z_1-w|}\right|\\<\sum_{w\in Z_f\cup P_f}\dfrac{1}{|z_1-w|}\dfrac{|z_2-z_1|}{d_f(z_1)-|z_2-z_1|}=\rho_f(z_1)\dfrac{|z_2-z_1|}{d_f(z_1)-|z_2-z_1|}.\end{multline*}

Therefore a final application of the triangle inequality gives us the inequality

\begin{multline*}
\rho_f(z_2)\leq\rho_f(z_1)+|\rho_f(z_2)-\rho_f(z_1)|<\rho_f(z_1)+\rho_f(z_1)\dfrac{|z_2-z_1|}{d_f(z_1)-|z_2-z_1|}\\=\rho_f(z_1)\dfrac{d_f(z_1)}{d_f(z_1)-|z_2-z_1|}.
\end{multline*}
\end{proof}


\begin{lemma}\label{lem: Lemma.}
Let $f$ be a rational function.
\begin{enumerate}
\item
The zeros of the logarithmic derivative $f'/f$ are exactly the non-trivial critical points of $f$, with the same multiplicities.
\item
The poles of $f'/f$ are exactly the zeros and poles of $f$, each with multiplicity one.
\end{enumerate}
\end{lemma}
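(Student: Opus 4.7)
The plan is to factor $f$ over its distinct zeros and poles as $f(z)=c\prod_i(z-z_i)^{k_i}$, where $\{z_i\}=Z_f\cup P_f$ and $k_i=\mult_f(z_i)$, and then to apply the product/power rule for the logarithmic derivative term by term to obtain the partial-fraction expansion
$$\dfrac{f'(z)}{f(z)}=\sum_i\dfrac{k_i}{z-z_i}.$$
From this single identity both parts of the lemma follow almost immediately.

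For Part~2, I would combine the sum over the common denominator $D(z)=\prod_i(z-z_i)$ to write $f'/f=N/D$, where $N$ is a polynomial of degree less than that of $D$. The key check is that $N$ does not vanish at any $z_j$: evaluating directly, $N(z_j)=k_j\prod_{i\neq j}(z_j-z_i)$, which is nonzero because each $k_j$ is a nonzero integer and the $z_i$ are distinct. Hence $N$ and $D$ share no common roots, so the poles of $f'/f$ are exactly the simple roots of $D$, namely the elements of $Z_f\cup P_f$, each of multiplicity one.

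For Part~1, I would observe that on the open set $\mathbb{C}\setminus(Z_f\cup P_f)$ the function $f$ is holomorphic and non-vanishing, so $1/f$ is holomorphic there as well. Consequently, at any point $z_0$ in this set, the local order of vanishing of $f'/f=f'\cdot(1/f)$ equals the local order of vanishing of $f'$, since the factor $1/f$ is a non-vanishing holomorphic unit at $z_0$. By the definition given in the paper, the non-trivial critical points of $f$ are precisely the zeros of $f'$ that lie outside $Z_f\cup P_f$, with the multiplicity of such a critical point equal to the order of vanishing of $f'$. Matching these up with the zeros of $f'/f$ outside $Z_f\cup P_f$ gives the claim, and we already know from Part~2 that $f'/f$ has no zeros on $Z_f\cup P_f$ (it has poles there), so the correspondence accounts for every zero of $f'/f$.

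There is no real obstacle here; the content of the lemma is the partial-fraction identity above combined with the verification that no spurious cancellation occurs when it is written over a common denominator. The hypothesis that $\mult_f(z_i)\neq 0$ for each $i$ is essential and is used precisely at the step $N(z_j)=k_j\prod_{i\neq j}(z_j-z_i)\neq 0$.
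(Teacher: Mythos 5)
Your proof is correct, and it is essentially the standard verification that the paper itself leaves as ``a straight forward exercise in arithmetic and the quotient rule'': writing $f(z)=c\prod_i(z-z_i)^{k_i}$ and expanding $f'/f=\sum_i k_i/(z-z_i)$ is exactly the partial-fraction identity the paper uses implicitly throughout (e.g.\ in the proof of Theorem~1). Your extra checks --- that the numerator $N(z_j)=k_j\prod_{i\neq j}(z_j-z_i)\neq 0$ rules out cancellation, and that $1/f$ is a non-vanishing unit away from $Z_f\cup P_f$ so orders of vanishing of $f'/f$ and $f'$ agree --- are precisely the details needed to make the exercise rigorous, so nothing is missing.
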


\begin{proof}
This is a straight forward exercise in arithmetic and the quotient rule for differentiation.
\end{proof}

\section{The Proofs.}\label{sect: Proofs.}

We begin with some notation.

\begin{definition}
Let $X\subset\mathbb{C}$ be a closed set, and let $\iota>0$ be given.
\begin{itemize}
\item We define $$B(X;\iota)=\left\{z\in\mathbb{C}:\min\left(|z-w|:w\in X\right)<\iota\right\}.$$  If $X$ is a singleton $X=\{w\}$, then we just write this set using the normal notation for a disk: $B(w;\iota)$.

\item We define $C(X;\iota)$ to be the boundary of $B(X;\iota)$: $$C(X;\iota)=\left\{z\in\mathbb{C}:\min\left(|z-w|:w\in X\right)=\iota\right\}.$$  Again, if $X$ is a singleton $X=\{w\}$, then we simplify the notation as $C(w;\iota)$.
\end{itemize}

\end{definition}

In each of our proofs, we will employ some version of the following.  For some rational functions $g$ and $h$, some set $X$ (usually some collection of the zeros, poles, and critical points of $g$ and $h$), and some constant $\iota>0$, we show that $|g'/g|>|h'/h|$ on $C(X;\iota)$.  Then Rouch\'e's theorem, along with Lemma~\ref{lem: Lemma.}, tell us that the difference between the number of non-trivial critical points and the number of distinct zeros and poles lying in any given component of $B(X;\iota)$ will be the same for both $g$ and the product $gh$.

We begin with the proof of Theorem~\ref{thm: Distance to nearest critical point.}.

\begin{proof}[Proof of Theorem~\ref{thm: Distance to nearest critical point.}]
Let $f$ be a rational function, and suppose that $z_0$ is either a zero or pole of $f$ (with multiplicity $\mult_f(z_0)$).  Then $f(z)=g(z)h(z)$, where $g(z)=(z-z_0)^{\mult_f(z_0)}$, and $h(z_0)$ is finite and non-zero.  Let $\iota$ represent some small positive number (at least smaller than $d_f(z_0)$).  By the method described at the start of this section, if $|g'/g|>|h'/h|$ on $C(z_0;\iota)$, the difference between the number of non-trivial critical points and the number of distinct zeros and poles lying in $B(z_0;\iota)$ will be the same for both $g$ and $gh$.  Inspecting the function $g$, we see that this common difference equals $-1$.  On the other hand, since the only zero or pole of $gh$ lying in $B(z_0;\iota)$ is $z_0$ (since $\iota<d_f(z_0)$), we conclude that $gh$ has no non-trivial critical points lying in $B(z_0;\iota)$ (our desired conclusion).

Thus we wish to find the largest value of $\iota$ for which we have
\begin{equation}
\label{eqn: First.}
\left|\dfrac{g'(z)}{g(z)}\right|>\left|\dfrac{h'(z)}{h(z)}\right|,
\end{equation}
for all $z\in C(z_0;\iota)$.  If $|z-z_0|=\iota$, then $$\left|\dfrac{g'(z)}{g(z)}\right|=\dfrac{|\mult_f(z_0)|}{\iota},$$ and the triangle inequality, along with Lemma~\ref{lem: Continuity of rho.}, gives \begin{multline*}\left|\dfrac{h'(z)}{h(z)}\right|=\left|\ds\sum_{w\in Z_h\cup P_h,w\neq z}\dfrac{\mult_h(w)}{z-w}\right|\leq\sum_{w\in Z_h\cup P_h,w\neq z}\left|\dfrac{\mult_h(w)}{z-w}\right|=\rho_h(z)\\\leq\rho_h(z_0)\dfrac{d_h(z_0)}{d_h(z_0)-\iota}.\end{multline*}

Thus for $|z-z_0|=\iota$, our goal $|g'/g|>|h'/h|$ is guaranteed by setting $$\rho_h(z_0)\dfrac{d_h(z_0)}{d_h(z_0)-\iota}<\dfrac{|\mult_f(z_0)|}{\iota}.$$  Solving this inequality for $\iota$, we obtain as a sufficient condition for our desired conclusion the inequality $\iota<\dfrac{d_h(z_0)\cdot|\mult_f(z_0)|}{d_h(z_0)\cdot\rho_h(z_0)+|\mult_f(z_0)|}$.  However inspecting the definition of $\rho_f$, $\rho_h$, $d_f$, and $d_h$, we see that $\rho_h(z_0)=\rho_f(z_0)$ and $d_h(z_0)=d_f(z_0)$.  Thus our lower bound on the radius of our exclusion region is the desired one: $\dfrac{d_f(z_0)\cdot|\mult_f(z_0)|}{d_f(z_0)\cdot\rho_f(z_0)+|\mult_f(z_0)|}$.

\end{proof}

We proceed to a proof of Theorem~\ref{thm: Adding in distant zeros and poles.}.

\begin{proof}[Proof of Theorem~\ref{thm: Adding in distant zeros and poles.}]
Let $g$ be a rational function, and let $R>0$ and $\epsilon>0$ be given.  By way of the ``sufficiently small'' and ``sufficiently large'' clauses in the statement of the theorem, we may assume that i) $\epsilon$ is less than one half the minimum distance between any two distinct zeros, poles, or critical points of $g$: $\epsilon<\dfrac{1}{2}\min\left(|w_1-w_2|:w_1,w_2\in Z_g\cup P_g\cup Z_{g'},w_1\neq w_2\right)$, ii) $\epsilon<R$, and iii)$Z_g\cup P_g\cup Z_{g'}\subset B(0;R-\epsilon)$.

By item (i) and (iii) above, $C(Z_g\cup P_g\cup Z_{g'};\epsilon)$ consists of a finite union of non-intersecting circles, each with radius $\epsilon$, centered at the elements of $Z_g\cup P_g\cup Z_{g'}$, all of which is contained in $B(0;R)$.  Define $$K=\dfrac{\min\left(\left|\dfrac{g'(z)}{g(z)}\right|:z\in C(Z_g\cup P_g\cup Z_{g'};\epsilon)\cup C(0;R)\right)}{2},$$ and reduce $K$ further if necessary to ensure that $K<1/2R$.  Let $h$ be any rational function for which $\rho_h(0)<K$ and $h(0)$ is finite and non-zero.

For any point $z\in\mathbb{C}$ with $|z|\leq R$, Lemma~\ref{lem: Continuity of rho.} implies that
\begin{equation}\label{eqn: Bound for rho_h(0).}
\rho_h(z)<\rho_h(0)\dfrac{d_h(0)}{d_h(0)-|z|}.
\end{equation}
It follows immediately from the definition of $\rho_h$ and $d_h$ that $\dfrac{1}{\rho_h(0)}<d_h(0)$, so since $$\rho_h(0)<K<\dfrac{1}{2R},$$ we have $|z|\leq R<\dfrac{d_h(0)}{2}$.  Plugging this into Inequality~\ref{eqn: Bound for rho_h(0).}, we obtain $\rho_h(z)<2\rho_h(0)$.  Now assume further that either $|z|=R$, or $|z-w|=\epsilon$ for some $w\in Z_g\cup P_g\cup Z_{g'}$.  Since $|z|<d_h(0)$, $z$ is not a zero or pole of $h$, so we have $$\left|\dfrac{h'(z)}{h(z)}\right|=\left|\ds\sum_{w\in Z_g\cup P_g}\dfrac{\mult_h(w)}{z-w}\right|\leq\rho_h(z)<2\rho_h(0)<2K<\left|\dfrac{g'(z)}{g(z)}\right|.$$

Let $w_0\in Z_g\cup P_g\cup Z_{g'}$ be given.  We have just seen that $|g'/g|>|h'/h|$ on $C(w_0;\epsilon)$.  By Rouch\'e's theorem and Lemma~\ref{lem: Lemma.}, the difference between the number of non-trivial critical point and the number of distinct zeros and poles lying in $B(w_0;\epsilon)$ is identical for both $g$ and $gh$.

Suppose first that $w_0$ is a zero or pole of $g$.  Since $d_h(0)>R$, and $w_0\in B(0;R-\epsilon)$, $w_0$ is not a zero or pole of $h$, so $w_0$ is a zero or pole of $gh$ with multiplicity $\mult_{gh}(w_0)=\mult_g(w_0)$.  The common difference between the number of non-trivial critical points and the number of distinct zeros and poles lying in $B(w_0;\epsilon)$ equals $-1$ (since no other zero, pole, or critical point of $g$ lies within $\epsilon$ of $w_0$).  Therefore since $w_0$ is a zero or pole of $gh$, we conclude that $gh$ has no non-trivial critical points lying in $B(w_0;\epsilon)$.  This establishes the first item of the theorem for the trivial critical points of $g$.

Now suppose that $w_0$ is a non-trivial critical point of $g$.  Then since no zero or pole of $g$ lies within $\epsilon$ of $w_0$, the common difference between non-trivial critical points and distinct zeros and poles equals $\mult_{g'}(w_0)$.  Since neither $g$ nor $h$ has a zero or pole lying within $\epsilon$ of $w_0$ (again since $d_h(0)>R$), we conclude that $gh$ must have exactly $\mult_{g'}(w_0)$ non-trivial critical points lying in $B(w_0;\epsilon)$, establishing the first item of the theorem also for the non-trivial critical points of $g$.

We have also shown that $|g'/g|>|h'/h|$ on $C(0;R)$.  Since $d_h(0)>R$, $g$ and $gh$ have exactly the same zeros and poles, with the same multiplicities, lying in the disk $B(0;R)$, so Rouch\'e's theorem and Lemma~\ref{lem: Lemma.} imply that $g$ and $gh$ have exactly the same number of non-trivial critical points lying in $B(0;R)$.  This, in conjunction with the result of the first item of the theorem, establishes the second item of the theorem.  (Note that it is important here that the balls $B(w;\epsilon)$ do not overlap for different critical points $w$ of $g$, so that no critical point of $gh$ appears in two such balls.)

\end{proof}

\begin{proof}[Proof of Theorem~\ref{thm: Adding in distant zeros and poles, polynomial case.}]

Fix some positive integer $n$, some $R>1$, and some $\epsilon>0$.  We now turn to the case that $p$ is a polynomial, with $\deg(p)=n$, and with all of the $n$ zeros of $p$ lying in $\mathbb{D}$.  Assume further that $p$ is monic (which is without loss of generality, as multiplying by a non-zero constant has no effect on the locations of the zeros, poles, or critical points of a rational function).  Since all of the zeros of $p$ (and therefore the zeros of $p'$) lie in $\mathbb{D}$, it follows that $$C\left(Z_p\cup Z_{p'};\epsilon/(2(2n-1))\right)\subset\mathbb{D}_{1+\epsilon/(2(2n-1))}.$$  Since $p$ was assumed to be monic, it follows that for $z\in C\left(Z_p\cup Z_{p'};\epsilon/(2(2n-1))\right)$, $$|p(z)|\leq\left(2+\epsilon/(2(2n-1))\right)^n.$$  On the other hand, since the distance from any such $z$ to any zero of $p'$ is at least $\epsilon/(2(2n-1))$, it follows that $$|p'(z)|\geq n\left(\dfrac{\epsilon}{2(2n-1)}\right)^{n-1}.$$  We therefore have the lower bound $$\left|\dfrac{p'(z)}{p(z)}\right|\geq\dfrac{n\left(\dfrac{\epsilon}{2(2n-1)}\right)^{n-1}}{\left(2+\epsilon/(2(2n-1))\right)^n}.$$  Set $$L=\dfrac{1}{2}\dfrac{n\left(\dfrac{\epsilon}{2(2n-1)}\right)^{n-1}}{\left(2+\epsilon/(2(2n-1))\right)^n},$$ and reduce $L$ if necessary to ensure that $$L<\dfrac{1}{2+\epsilon(2(2n-1))}.$$  Let $h$ be any rational function for which $\rho_h(0)<L$ and $h(0)$ is finite and non-zero.

Just as in the proof of Theorem~\ref{thm: Adding in distant zeros and poles.}, Lemma~\ref{lem: Continuity of rho.} now may be applied to a point $z\in C\left(Z_p\cup Z_{p'};\epsilon/(2(2n-1))\right)$, with the eventual conclusion that $$\left|\dfrac{h'(z)}{h(z)}\right|\leq\rho_h(z)<2L\leq\left|\dfrac{p'(z)}{p(z)}\right|.$$  Let $\Omega$ be some component of $B(Z_p\cup Z_{p'};\epsilon/(2(2n-1)))$.  As shown above, $|g'/g|>|h'/h|$ on the boundary of $\Omega$.  $d_h(0)>1/\rho_h(0)>2+\epsilon/(2(2n-1))$ and $\Omega\subset\mathbb{D}_{1+\epsilon/(2(2n-1))}$, so $h$ has no zeros or poles lying in $\Omega$.  Therefore $p$ and $ph$ have precisely the same zeros and poles lying in $\Omega$, so Rouch\'e's theorem and Lemma~\ref{lem: Lemma.} imply that $p$ and $ph$ have the same number of critical points, counted with multiplicity, lying in $\Omega$.

Observe that since $Z_p\cup Z_{p'}$ contains at most $2n-1$ points, and $\Omega$ is a component of $B(Z_p\cup Z_{p'};\epsilon/(2(2n-1)))$, so the diameter of $\Omega$ is at most $$(2n-1)\cdot2\cdot\dfrac{\epsilon}{2(2n-1)}=\epsilon,$$ which establishes the first item of the theorem.

We now turn to the second part of the theorem.  Fix some $z\in\mathbb{C}$ with $|z|=R$.  For the sake of convenience, assume that $z=R$.  Then for any $w=x+iy\in\mathbb{D}$, $\dfrac{1}{z-w}=\dfrac{(R-x)+iy}{(R-x)^2+y^2}$.  Since $y<1$ and $R-1<R-x<R+1$, we have $$\Re\left(\dfrac{1}{z-w}\right)=\dfrac{R-x}{(R-x)^2+y^2}>\dfrac{R-1}{(R+1)^2+1}.$$  Thus we have \begin{multline*}\left|\dfrac{p'(z)}{p(z)}\right|\geq\Re\left(\dfrac{p'(z)}{p(z)}\right)=\displaystyle\sum_{w\in Z_p}\Re\left(\dfrac{\mult_p(w)}{z-w}\right)>\sum_{w\in Z_p}\mult_p(w)\dfrac{R-1}{(R+1)^2+1}\\=n\dfrac{R-1}{(R+1)^2+1}.\end{multline*}  Now reduce $L$ yet further if necessary to ensure that $$L<\dfrac{1}{2}\dfrac{n(R-1)}{(R+1)^2+1}.$$  If $\rho_h(0)<L$, with $h(0)$ finite and non-zero, then by the same method used in the previous part of this proof, and in the proof of Theorem~\ref{thm: Adding in distant zeros and poles.}, we will have $|h'/h|<|p'/p|$ on $C(0;R)$, implying eventually that $p$ and $ph$ have the same number of critical points lying in $\mathbb{D}_R$, establishing the second item of the theorem.

\end{proof}

Our approach in the proof of Theorem~\ref{thm: Critical points of gh^n.} is slightly different.  Now, instead of starting with a rational function $g$, and imposing restrictions on a rational function $h$ to ensure that $|h'/h|<|g'/g|$ on some given set, we start with the functions $g$ and $h$ and a given set, and find a positive integer $n$ which will ensure that $|g'/g|<|(h^n)'/h^n|$ on that set.

\begin{proof}[Proof of Theorem~\ref{thm: Critical points of gh^n.}]

Let $g$ and $h$ be any two rational functions, with $h$ non-constant.  Let $\epsilon>0$ be given.  To simplify our notation, let $A$ denote the collection of all distinct zeros, poles, and critical points of both $g$ and $h$: $A=Z_g\cup P_g\cup Z_{g'}\cup Z_h\cup P_h\cup Z_{h'}$.  Assume that $\epsilon$ is less thanthe minimum distance between any two elements of $A$ (which may be done by the ``sufficiently small'' clause in the statement of the theorem).  Set $$M=\max\left(\left|\dfrac{g'(z)}{g(z)}\right|:z\in\ds\bigcup_{w\in A}C(w;\epsilon)\right),$$ and $$m=\min\left(\left|\dfrac{h'(z)}{h(z)}\right|:z\in\ds\bigcup_{w\in A}C(w;\epsilon)\right).$$

By Lemma~\ref{lem: Lemma.} and the choice of $\epsilon$, $M$ and $m$ are both finite and non-zero.  Choose $n>0$ large enough that $M<nm$.  Fix some $w_0\in A$.  For any $z\in\mathbb{C}$ with $|z-w_0|=\epsilon$, we have $$\left|\dfrac{g'(z)}{g(z)}\right|\leq M<nm<n\left|\dfrac{h'(z)}{h(z)}\right|=\left|\dfrac{(h^n(z))'}{h^n(z)}\right|.$$  Thus by Rouch\'e's theorem and Lemma~\ref{lem: Lemma.}, the difference between the number of non-trivial critical points and the number of distinct zeros and poles lying in $B(w_0;o\epsilon)$ is identical for both $h^n$ and $gh^n$.

Suppose that $w_0$ is a non-trivial critical point of $h$.  Then no zeros or poles of either $g$ or $h^n$ lie in $B(w_0;\epsilon)$, so $h^n$ and $gh^n$ have the same number of non-trivial critical points lying in $B(w_0;\epsilon)$.  However $h$ and $h^n$ have exactly the same non-trivial critical points, with the same multiplicities, so we have established the first item of the theorem.

On the other hand, suppose that $w_0$ is a zero or pole of $g$.  Then again, by choice of $\epsilon$, $h^n$ has no zeros, poles, or critical points lying in $B(w_0;\epsilon)$, and $g$ has no non-trivial critical points in $B(w_0;\epsilon)$.  Therefore the number of non-trivial critical points of $gh^n$ lying in $B(w_0;\epsilon)$ is the same as the number of distinct zeros or poles of $gh^n$ lying in $B(w_0;\epsilon)$, namely one.  This establishes the second item of the theorem.

\end{proof}

\bibliographystyle{amsplain}
\bibliography{ADDITIONAL_FILES/refs}

\end{document}